\documentclass[11pt]{amsart}

\usepackage{amsmath,amsthm,amssymb,mathrsfs}

\usepackage[hidelinks]{hyperref}

\newtheorem{theorem}{Theorem}
\newtheorem{proposition}[theorem]{Proposition}
\newtheorem{corollary}[theorem]{Corollary}

\theoremstyle{definition}

\newtheorem{remark}[theorem]{Remark}
\newtheorem{example}[theorem]{Example}

\newcommand{\R}{\ensuremath{\mathbb{R}}}

\newcommand{\diff}{\mathrm{d}}
\newcommand{\cH}{\ensuremath{\mathcal{H}}}
\newcommand{\x}{\ensuremath{x}}
\newcommand{\barg}[1]{g\left( #1\right)}
\newcommand{\dd}[2]{\ensuremath{\left.\frac{\diff}{\diff #1}\right|_{#2}}}
\newcommand{\cR}{\ensuremath{\mathcal{R}}}

\newcommand{\II}{\mathop{}\!\mathrm{I\!I}}

\newcommand{\pz}{\ensuremath{{\mathop{}\!\partial_z}}}
\newcommand{\pbz}{\ensuremath{{\mathop{}\!\partial_{\bar{z}}}}}

\newcommand{\hess}{\operatorname{Hess}}
\newcommand{\Ric}{\operatorname{Ric}}

\newcommand{\tr}{\operatorname{tr}}	
\newcommand{\ii}{\ensuremath{\sqrt{-1}}}
\newcommand{\set}[1]{\left\{#1\right\}}

\begin{document}

\title[Hopf type theorem for surfaces in Riemannian manifolds]{Hopf type theorem for surfaces of constant weighted mean curvature in Riemannian manifolds}
\author{Jiaming Chen}
\address{School of Mathematics \\  Guangxi University \\ Nanning 530004, P. R. China}
\email{1067036321@qq.com}

\author{Shanze Gao}
\address{School of Mathematics \\  Guangxi University \\ Nanning 530004, P. R. China}
\address{Guangxi Center for Mathematical Research, Guangxi University, Nanning 530004, P. R. China}
\email{gaoshanze@gxu.edu.cn}

\keywords {weighted mean curvature, weighted Hopf differential, totally umbilical surface.}
\subjclass[2020]{53C42; 53C24; 53C21}

\begin{abstract}
We consider compact immersed surfaces of genus zero in a three-dimensional smooth metric measure space \((M^3,g,e^{-f}\mathrm{d}V_{g})\). We introduce a new weighted mean curvature and prove that any such surface whose weighted mean curvature is constant must be totally umbilical, provided that a condition relating the Ricci curvature of $M$ and the weight function $f$ is satisfied. 
We also give an application of this result to the uniqueness of the dual Christoffel-Minkowski problem.
\end{abstract}

\maketitle

\section{Introduction}
\label{sec:introduction-3-mfd}

The classical Hopf theorem (see \cite{Hopf1951,hopf2013differential}) states that 
any constant mean curvature (CMC) immersion of a compact surface of genus $0$ into $\R^3$ is totally umbilical and hence a round sphere.
The assumption on genus is necessary since there exists a CMC torus immersed in $\R^3$ (see \cite{Wente1986}). On the other hand, for a compact immersed surface with genus $0$, the Hopf-type theorem holds provided that a curvature condition weaker than the CMC condition is satisfied. Interested readers may see \cite{hopf2013differential,HartmanWintner1954,Chern1955,AlencarCarmoTribuzy2007,KoisoPalmer2010} etc.

The proof of Hopf is based on the fact that the CMC condition is equivalent to the holomorphicity of a quadratic differential, which is often called the Hopf differential. Abresch and Rosenberg \cite{AbreschRosenberg2004} generalize the differential for 
CMC surfaces in the product spaces $\mathbb{S}^2\times\mathbb{R}$ and $\mathbb{H}^2\times\mathbb{R}$.
He and Li \cite{HeLi2009} introduce an anisotropic Hopf differential,  which is holomorphic if and only if a complex anisotropic mean curvature is constant.

In the present paper, we consider surfaces in a smooth metric measure space $(M^3,g,e^{-f}\diff V_{g})$, where $(M^3,g)$ is a $3$-dimensional Riemannian manifold, $\diff V_{g}$ is the volume form with respect to $g$, and $f$ is a real-valued smooth function on $M$. Let $\x:\Sigma\to M$ be an immersed surface. The mean curvature and the unit normal vector of $\Sigma$ are denoted by $H$ and $\nu$ respectively. Using a weighted Hopf differential (see Section \ref{sec:Hopf-differential-3-mfd}), we show the following result.

\begin{theorem}
    \label{thm:Hopf-type-thm-3-mfd}
	Assume that there exists a function $\lambda$ on $M$ such that the Ricci curvature of $M$ satisfies
	\begin{equation}\label{eq:Ric}
        \Ric+ \frac{1}{2}\hess f + \frac{1}{4}\diff f\otimes\diff f=\lambda g .
    \end{equation}
    If $\x:\Sigma\to M$ is a compact immersing surface of genus $0$ and its mean curvature satisfies
	\begin{equation}\label{eq:efH}
        e^{\frac{1}{2}f}\left( H +\frac12 \diff f(\nu) \right) = c
    \end{equation}
    on $\Sigma$ for some constant $c$, then $\Sigma$ is totally umbilical.
\end{theorem}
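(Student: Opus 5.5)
Following Hopf's original argument, the plan is to find a quadratic differential on $\Sigma$ that is holomorphic when \eqref{eq:efH} and \eqref{eq:Ric} hold. On a genus-zero surface such a differential must vanish identically, and its vanishing is umbilicity. The natural idea is to absorb the weight into a conformal change of the ambient metric. Set $\tilde g = e^{f}g$ (so $\tilde g = e^{2\varphi}g$ with $\varphi=\tfrac12 f$).

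\emph{Step 1: compute the effect of the conformal change.} Under $\tilde g=e^{2\varphi}g$, the unit normal becomes $\tilde\nu=e^{-\varphi}\nu$. The second fundamental form changes by $\tilde h = e^{\varphi}\big(h+\diff\varphi(\nu)\,g|_\Sigma\big)$, and the mean curvature (normalized as the trace) changes by $\tilde H = e^{-\varphi}\big(H+2\diff\varphi(\nu)\big)$. With $\varphi=\tfrac12 f$ this gives
\[
\tilde H = e^{-\frac12 f}\Big(H+\diff f(\nu)\Big).
\]
This is not exactly \eqref{eq:efH}, so the exponent must be tuned. Write $\tilde g=e^{2\varphi}g$ with $\varphi = a f$. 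Then $\tilde H = e^{-af}(H+2a\,\diff f(\nu))$, and matching the combination $H+\tfrac12\diff f(\nu)$ forces $a=\tfrac14$. The prefactor is then $e^{-f/4}$ instead of $e^{f/2}$, so $\tilde H$ alone is not constant. Consequently a pure conformal change will not do, and I will work with a weighted Hopf differential directly. Let $z$ be a local conformal coordinate on $\Sigma$, write the induced metric as $e^{2u}|\diff z|^2$, and define
\[
\Phi = e^{\frac12 f}\, h(\partial_z,\partial_z)\,\diff z^2 .
\]
Traceless parts are conformally covariant, so $h(\partial_z,\partial_z)$ is the right object and only the weight needs adjusting.

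\emph{Step 2: compute $\partial_{\bar z}$ of the coefficient using Codazzi.} In a general ambient manifold the Codazzi equation gives
\[
\pbz\big(h(\pz,\pz)\big) = \tfrac12 e^{2u}\,\pz H + e^{2u}\cdot\tfrac12\, \bar R(\pz,\nu),
\]
where $\bar R(\pz,\nu)$ denotes the ambient term $\Ric(\pz,\nu)$ up to sign. Multiplying by $e^{f/2}$ produces an extra term $\tfrac12 \pbz f\, h_{zz}$ from differentiating the weight. Differentiating \eqref{eq:efH} in $z$ expresses $\pz H$ through $\pz f$, $H$, and $\pz(\diff f(\nu))$. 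The last of these splits as $\hess f(\pz,\nu) + \diff f(\text{shape operator applied to } \pz)$. That shape-operator term contains $h_{zz}\pbz f$ and $H\pz f$ pieces, which should cancel against the weight-derivative term and the $\pz H$ term. The leftover tangential-normal terms are then
\[
\Ric(\pz,\nu)+\tfrac12\hess f(\pz,\nu)+\tfrac14\pz f\,\diff f(\nu),
\]
and by \eqref{eq:Ric} this equals $\lambda g(\pz,\nu)=0$. Hence $\pbz(e^{f/2}h_{zz})=0$ and $\Phi$ is holomorphic.

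\emph{Step 3: conclude.} On the sphere every holomorphic quadratic differential vanishes. Therefore $h_{zz}\equiv 0$, which means the traceless second fundamental form vanishes and $\Sigma$ is totally umbilical.

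The main obstacle is the bookkeeping in Step 2: the weight exponent $\tfrac12$, the factor $\tfrac12$ in front of $\diff f(\nu)$, and the coefficients $\tfrac12,\tfrac14$ in \eqref{eq:Ric} must all match exactly. If the cancellation of the $h_{zz}\pbz f$ term fails, I would first check the Codazzi and mean curvature normalization conventions ($H$ as trace versus average). If it still fails, I would replace $e^{f/2}$ by $e^{\alpha f}$ and solve for the $\alpha$ that makes the cancellation work.
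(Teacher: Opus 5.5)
Your strategy is the paper's: a weighted Hopf differential, the Codazzi equation, differentiation of \eqref{eq:efH}, and Hopf's vanishing theorem on genus $0$. But Step 2 fails as written. The weight $e^{\frac12 f}$ has the wrong sign, so the claimed cancellation of the $h_{zz}\,\pbz f$ terms does not happen.

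In the paper's conventions ($\II(X,Y)=g(\nabla_XY,\nu)$, $H=\frac12\tr_{g_\Sigma}\II$, $\phi=\II(\x_z,\x_z)$, $g_\Sigma=\rho^2|\diff z|^2$), Codazzi reads $\pbz\phi=\frac{\rho^2}{2}\bigl(\pz H-\Ric(\x_z,\nu)\bigr)$. The minus sign matters: with a plus sign the leftover would not be $\cR_f$. Differentiating \eqref{eq:efH} and using $\pz(\diff f(\nu))=\hess f(\x_z,\nu)-H\pz f-\frac{2}{\rho^2}\phi\,\pbz f$ gives
\[
\pz H=-\tfrac12\hess f(\x_z,\nu)-\tfrac14\pz f\,\diff f(\nu)+\tfrac{1}{\rho^2}\,\phi\,\pbz f .
\]
Therefore, under \eqref{eq:Ric} and \eqref{eq:efH}, for any exponent $\alpha$,
\[
\pbz\bigl(e^{\alpha f}\phi\bigr)=e^{\alpha f}\Bigl(-\tfrac{\rho^2}{2}\,\cR_f(\x_z,\nu)+\bigl(\tfrac12+\alpha\bigr)\phi\,\pbz f\Bigr)=\bigl(\tfrac12+\alpha\bigr)e^{\alpha f}\phi\,\pbz f .
\]
The $H\pz f$ terms do cancel. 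However, the $\phi\,\pbz f$ term from differentiating the weight adds to, rather than cancels, the one from the Weingarten part of $\pz(\diff f(\nu))$. For your $\alpha=\frac12$ you get $\pbz(e^{\frac12 f}\phi)=e^{\frac12 f}\phi\,\pbz f$, which is nonzero wherever $\phi\neq0$ and $\pbz f\neq0$. Only $\alpha=-\frac12$ works, which is exactly the paper's $\Phi_f=e^{-\frac12 f}\Phi$. Your fallback of solving for $\alpha$ would find this, and with that correction Steps 2--3 coincide with the paper's proof.

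The conformal route you discarded in Step 1 actually works. The mismatch came from conventions: you used a trace-normalized $H$, the wrong sign in the transformation of $\II$, and the wrong direction of the conformal factor.

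For $\tilde g=e^{2\varphi}g$ one has $\tilde\nu=e^{-\varphi}\nu$ and $\widetilde{\II}=e^{\varphi}\bigl(\II-\diff\varphi(\nu)\,g_\Sigma\bigr)$, hence $\tilde H=e^{-\varphi}\bigl(H-\diff\varphi(\nu)\bigr)$. Taking $\varphi=-\frac12 f$, i.e. $\tilde g=e^{-f}g$, gives exactly $\tilde H=\cH_f$ and $\tilde\phi=e^{-\frac12 f}\phi=\phi_f$.

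The three-dimensional formula $\widetilde{\Ric}=\Ric-\hess\varphi+\diff\varphi\otimes\diff\varphi-\bigl(\Delta\varphi+|\diff\varphi|^2\bigr)g$ then gives $\widetilde{\Ric}=\cR_f+\bigl(\frac12\Delta f-\frac14|\diff f|^2\bigr)g$. So \eqref{eq:Ric} says precisely that $e^{-f}g$ is Einstein, the converse of Example \ref{ex:conformal-Einstein-mfd}. Proposition \ref{prop:Hopf-diff-derivative-3-mfd} is then just the Codazzi equation of $\Sigma$ in $(M,e^{-f}g)$. The theorem follows from the classical Hopf argument for CMC spheres in that Einstein metric, plus conformal invariance of umbilicity.

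This route is more conceptual and explains the weight $e^{-\frac12 f}$. The paper's direct computation has the merit of being self-contained.
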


\begin{remark}
We recall the $m$-Bakry-\'Emery Ricci tensor (see, e.g.,  \cite{wei2009comparison})
\begin{equation*}
\Ric_f^m = \Ric+\hess f - \frac{1}{m}\diff f \otimes \diff f.
\end{equation*}
Then the condition \eqref{eq:Ric} can be written as
\begin{equation*}
\Ric_{\frac{f}{2}}^{-1}=\lambda g.
\end{equation*}
There are also some similarities between \eqref{eq:Ric} and the gradient Ricci soliton equation
\begin{equation*}
\Ric+\hess f=\lambda g.
\end{equation*}
Specially, when $f$ is constant, the condition \eqref{eq:Ric} means that $M$ is an Einstein manifold.
\end{remark}

We give some examples of spaces satisfying the condition \eqref{eq:Ric}.
\begin{example}\label{ex}
Let $M=\mathbb{R}^3$ with the standard metric $g=\sum_{i=1}^3 \diff x_i^2$. We choose $f=2\log |x|^2$. Since the Ricci curvature vanishes, $\diff f=\sum_i\frac{4x_i}{|x|^2}\diff x_i$ and
\[ \hess f=\sum_{i,j}\left( \frac{4\delta_{ij}}{|x|^2}-\frac{8x_ix_j}{|x|^4}\right) \diff x_i \diff x_j, \]
we have
\[ \Ric+ \frac{1}{2}\hess f + \frac{1}{4}\diff f\otimes\diff f= \frac{2}{|x|^2}g, \]
which verifies the condition \eqref{eq:Ric}.
\end{example}

\begin{example}\label{ex:conformal-Einstein-mfd}
Let $(M^3,g)$ be a conformally Einstein manifold, i.e., there exists a smooth function $u$ such that $\tilde{g}=e^{2u}g$ and $\Ric_{\tilde{g}} = \tilde{\lambda} \tilde{g}$, where $\Ric_{\tilde{g}}$ is the Ricci curvature with respect to the metric $\tilde{g}$ and $\tilde{\lambda}$ is a function on $M$. We choose $f=-2u$. Then
\begin{align*}
\Ric + \frac12\hess f + \frac{1}{4}\diff f\otimes\diff f = \left( \tilde{\lambda} e^{-f} - \frac12\Delta f + \frac14|\diff f|^2 \right)g,
\end{align*}
which satisfies the condition \eqref{eq:Ric}.
\end{example}

\begin{remark}
The weighted mean curvature $H_f$ for a surface $\Sigma$ in $(M^3,g,e^{-f}\diff V_{g})$ is usually defined by 
\[ H_f=H+\frac12\diff f(\nu), \]
which is from the first variation of the weighted volume $\int_{\Sigma}e^{-f}\diff V_\Sigma$ of $\Sigma$. We may define a new weighted mean curvature as
\begin{equation*}
  \cH_f := e^{\frac{1}{2}f}H_f.
\end{equation*}
Then the condition \eqref{eq:efH} means that $\cH_f$ is constant. When $f$ is constant, the condition \eqref{eq:efH} simply reduces to $H$ being constant.
\end{remark}

As a direct corollary of Theorem \ref{thm:Hopf-type-thm-3-mfd}, we have the following result.
\begin{corollary}
Assume that there exists a function $\lambda$ on $M$ such that the Ricci curvature of $M$ satisfies \eqref{eq:Ric}.
If $\x:\Sigma\to M$ is a compact immersing $f$-minimal surface (namely $H_f=0$) of genus $0$, then $\Sigma$ is totally umbilical.
\end{corollary}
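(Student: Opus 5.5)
The corollary is the special case $c=0$ of Theorem \ref{thm:Hopf-type-thm-3-mfd}, so the plan is simply to check the hypotheses. Assume \eqref{eq:Ric} holds for some function $\lambda$ on $M$, and let $\x:\Sigma\to M$ be a compact immersed surface of genus $0$ with $H_f=H+\frac12\diff f(\nu)=0$ on $\Sigma$.

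Since $e^{\frac12 f}$ is a finite positive function on $M$, we have
\[
\cH_f=e^{\frac12 f}\Bigl(H+\tfrac12\diff f(\nu)\Bigr)=e^{\frac12 f}H_f=0
\]
on $\Sigma$. Thus \eqref{eq:efH} holds with the constant $c=0$. All hypotheses of Theorem \ref{thm:Hopf-type-thm-3-mfd} are now satisfied: the curvature condition \eqref{eq:Ric}, compactness, genus $0$, and constancy of $\cH_f$. The theorem therefore gives that $\Sigma$ is totally umbilical.

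There is no real obstacle here. The only point to note is that the theorem allows any constant $c$, including $c=0$, so no separate argument is needed for the minimal case. All the substantive work lies in the theorem itself, namely the holomorphicity of the weighted Hopf differential under \eqref{eq:Ric} and \eqref{eq:efH}. That result is taken as given.
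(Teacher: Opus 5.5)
Your proposal is correct and matches the paper, which presents this corollary as the direct special case $c=0$ of Theorem~\ref{thm:Hopf-type-thm-3-mfd}: since $H_f=0$, you get $\cH_f=e^{\frac12 f}H_f=0$, and the theorem then applies as stated.
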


A prominent example of $f$-minimal surfaces is the self-shrinker of mean curvature flow (see e.g. \cite{Huisken1990,ColdingMinicozzi2012}). Brendle \cite{Brendle2016} proves that the round sphere is the only embedded, compact self-shrinker of genus $0$ in $\mathbb{R}^3$. The embeddedness assumption is necessary since there exist immersed self-shrinkers which are homeomorphic to the sphere but not the round one (see \cite{Drugan2015}). Alencar et al. \cite{AlencarSilvaNetoZhouHopf-type-theorem} establish a Hopf type theorem under an assumption to exclude these nonround examples. Note that self-shrinkers in $\mathbb{R}^3$ do not satisfy the condition \eqref{eq:Ric}.

Applying Theorem \ref{thm:Hopf-type-thm-3-mfd} to Example \ref{ex}, we obtain the following result.
\begin{corollary}\label{cor:ex}
	If $\x:\Sigma\to \mathbb{R}^3$ is a compact immersing surface of genus $0$ and satisfies
	\begin{equation}\label{eq:ex}
        |\x|^2\left( H + \frac{2\langle \x,\nu \rangle}{|\x|^2} \right) = c
    \end{equation}
    on $\Sigma$ for some constant $c$, then $\Sigma$ is a round sphere.
\end{corollary}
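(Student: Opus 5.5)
The plan is to apply Theorem \ref{thm:Hopf-type-thm-3-mfd} to Example \ref{ex}, that is, to $M=\R^3$ with the flat metric and $f=2\log|x|^2$, and then use the flat ambient geometry to pass from ``totally umbilical'' to ``round sphere''.

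First I check that the weight is defined along $\Sigma$. Since $f$ is singular at the origin, I need $\x(\Sigma)\subset\R^3\setminus\{0\}$. With this $f$ we have $e^{\frac12 f}=|x|^2$ and $\diff f(\nu)=4\langle x,\nu\rangle/|x|^2$. Hence
\[
e^{\frac12 f}\left(H+\tfrac12\diff f(\nu)\right)=|x|^2\left(H+\frac{2\langle x,\nu\rangle}{|x|^2}\right),
\]
so \eqref{eq:ex} is exactly \eqref{eq:efH}.

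The origin needs care; this is the main obstacle. I will interpret \eqref{eq:ex} through its cleared form $|x|^2H+2\langle x,\nu\rangle=c$, which makes sense everywhere on $\Sigma$. If $\Sigma$ passes through the origin, I restrict to $M=\R^3\setminus\{0\}$ whenever $0\notin\x(\Sigma)$. If $0\in\x(\Sigma)$, then evaluating the cleared equation at such a point forces $c=0$.

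\begin{itemize}
\item In the case $c=0$, I use inversion. The condition $|x|^2H+2\langle x,\nu\rangle=0$ is precisely the statement that the image of $\Sigma$ under the inversion $x\mapsto x/|x|^2$ is minimal, since under inversion $\tilde H=\pm(|x|^2H+2\langle x,\nu\rangle)$. Example \ref{ex:conformal-Einstein-mfd} reflects the same fact. Inversion is conformal and so preserves umbilicity. I would handle this degenerate case by the local holomorphic-Hopf-differential argument, which goes through near points away from $0$. Holomorphicity then extends across the finitely many preimages of $0$ by removability, because the Hopf differential is smooth on $\Sigma$.
\item In the main case $0\notin\x(\Sigma)$, $f$ is smooth on $M=\R^3\setminus\{0\}$. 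Example \ref{ex} verifies \eqref{eq:Ric} there with $\lambda=2/|x|^2$, and Theorem \ref{thm:Hopf-type-thm-3-mfd} gives directly that $\Sigma$ is totally umbilical.
\end{itemize}

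Finally I use the classical fact that a connected totally umbilical surface in $\R^3$ lies in a plane or a round sphere. The Codazzi equation makes the umbilic factor constant when the ambient space is flat. A compact surface cannot be immersed in a plane, so $\x(\Sigma)$ lies in a round sphere. Since $\Sigma$ is compact, the immersion is then a covering onto that sphere, and genus $0$ makes it a diffeomorphism. Thus $\Sigma$ is a round sphere.
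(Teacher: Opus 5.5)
Your main case ($0\notin\x(\Sigma)$) is exactly the paper's argument. There $f=2\log|x|^2$ is smooth on $M=\R^3\setminus\{0\}$, \eqref{eq:ex} reads $\cH_f=c$, and Example~\ref{ex} gives \eqref{eq:Ric} with $\lambda=2/|x|^2$. Theorem~\ref{thm:Hopf-type-thm-3-mfd} then yields total umbilicity, and the classical classification of umbilic surfaces in $\R^3$ finishes the proof. That part is fine.

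The problem is your degenerate case $0\in\x(\Sigma)$, $c=0$, where the removability step is wrong.

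\emph{The removability step.} The differential that is holomorphic on $\Sigma\setminus\x^{-1}(0)$ is $\Phi_f=e^{-\frac12 f}\Phi=|\x|^{-2}\Phi$, not $\Phi$. In fact $\Phi_f$ is exactly the Hopf differential of the inverted minimal surface. The smooth object $\Phi$ is not holomorphic, and $\Phi_f$ is unbounded near each preimage $p$ of the origin, so Riemann's removable singularity theorem does not apply. What you actually get is that $\Phi_f$ is meromorphic on $\Sigma\cong S^2$, with at most simple poles at the $k$ preimages of $0$. The pole bound holds because the second-order Taylor expansion of $|\x|^2H+2\langle\x,\nu\rangle=0$ at $p$ shows that $p$ is umbilic, so $\phi=O(|z|)$ while $|\x|^2\sim|z|^2$. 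A nonzero meromorphic quadratic differential on the sphere has four more poles than zeros, counted with multiplicity. So this argument forces $\Phi_f\equiv0$ only when $k\le 3$.

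\emph{The extended claim is false for $k\ge 4$.} Take Bryant's complete minimal surfaces of genus zero with four embedded planar ends and invert them in a point off the surface. They close up to smooth immersed spheres (Willmore spheres of energy $16\pi$) which pass four times through the origin. They satisfy $|\x|^2H+2\langle\x,\nu\rangle\equiv 0$, since away from the origin this quantity is $\pm$ the mean curvature of the minimal surface. They are not round, because inversion preserves umbilicity.

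\emph{How to fix it.} Do not try to include the origin. The statement \eqref{eq:ex} divides by $|\x|^2$, and Theorem~\ref{thm:Hopf-type-thm-3-mfd} needs $f$ smooth on $M$. So the corollary, as the paper applies it, tacitly assumes $0\notin\x(\Sigma)$. With that reading, drop the degenerate case and your proof is complete.
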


\begin{remark}
The condition \eqref{eq:ex} is related to the dual Christoffel-Minkowski problem. In fact, surfaces satisfying
\[ H + \frac{2\langle \x,\nu \rangle}{|\x|^2}=0 \]
arise from the isotropic dual Christoffel-Minkowski problem. Corollary \ref{cor:ex} is a uniqueness result of the problem. Interested readers may see \cite{IvakiMilman2023,LiWan2024} etc. for some related results.
\end{remark}

The paper is organized as follows.
In Section \ref{sec:preliminaries-3-mfd}, we fix notation and recall basic facts about Riemannian manifolds and isothermal coordinates on immersed surfaces. 
In Section \ref{sec:Hopf-differential-3-mfd}, we investigate the weighted Hopf differential and derives a key formula about $\bar{\partial}$-derivative of this differential (see Proposition \ref{prop:Hopf-diff-derivative-3-mfd}). Following that, we complete the proof of Theorem \ref{thm:Hopf-type-thm-3-mfd}.

\section{Preliminaries}
\label{sec:preliminaries-3-mfd}

In this section, we recall some facts on surfaces in Riemannian manifolds.

Let $(M^3,g)$ be a three-dimensional Riemannian manifold and let $f$ be a smooth function on $M$. Let $\nabla$ denote the Levi-Civita connection on $(M,g)$.
The $(1,3)$-type and $(0,4)$-type Riemannian curvature tensors of $M$ are defined by
\begin{equation*}
    R(X_1,X_2)X_3 = \nabla_{X_1} \nabla_{X_2} X_3 - \nabla_{X_2} \nabla_{X_1} X_3 - \nabla_{[X_1,X_2]} X_3
\end{equation*}
and
\begin{equation*}
    R(X_1, X_2, X_3, X_4) = \barg{R(X_1, X_2)X_3,X_4},
\end{equation*}
for smooth vector fields $X_1,X_2,X_3$ and $X_4$ on $M$. 
The Ricci curvature $\Ric$ of $M$ is defined by
\begin{equation*}
\Ric(X_1,X_2)=\sum_{i=1}^{3}R(e_i,X_1,X_2,e_i),
\end{equation*}
where $\{e_1,e_2,e_3\}$ is an orthonormal frame on $M$.

Let $\x:\Sigma\to M$ be an immersed surface and $g_\Sigma:=\x^* g$ be the induced metric on $\Sigma$. From the isothermal coordinates theorem,  we know that, for a point $p\in\Sigma$, there exists a neighborhood of $p$ with isothermal coordinates $(u,v)$ such that
\begin{equation*}
    g_\Sigma = \rho^2 (\diff u^2 + \diff v^2),
\end{equation*}
where $\rho = \rho(u,v)$ is a smooth positive function.

Set $z = u + \ii v$. In this local complex coordinate system, we denote
\begin{equation*}
    \x_z =\diff\x\left( \frac{\partial}{\partial z} \right) \quad \text{ and } \quad
    \x_{\bar{z}} =\diff\x\left(  \frac{\partial}{\partial \bar{z}} \right),
\end{equation*}
where
\[ \frac{\partial}{\partial z}=\frac{1}{2}\left(  \frac{\partial}{\partial u} - \ii \frac{\partial}{\partial v}\right) \quad \text{ and } \quad \frac{\partial}{\partial \bar{z}}=\frac{1}{2}\left(  \frac{\partial}{\partial u} + \ii \frac{\partial}{\partial v}\right). \]
It can be checked that
\begin{equation*}
    g(\x_z,\x_{\bar z}) = \frac{\rho^2}{2},\quad
    g(\x_z,\x_z) = g(\x_{\bar z},\x_{\bar z})=0.
\end{equation*}

Let $\nu$ denote the unit normal vector of $\Sigma$. Clearly,
\[ g(\x_z,\nu) = g(\x_{\bar z},\nu)=0. \]
The second fundamental form $\II$ of $\Sigma$ is defined by
\begin{equation*}
    \II(X,Y) = g(\nabla_X Y,\nu)
\end{equation*}
for vector fields $X$ and $Y$ on $\Sigma$.
The mean curvature $H$ is given by
\[ H =\frac12 \tr_{g_\Sigma} \II=\frac{\II(\x_z,\x_{\bar z})}{\barg{\x_z,\x_{\bar z}}}. \]
Let $\phi = \II(\x_z,\x_z)$. The conjugate of $\phi$ is given by $\bar{\phi}=\II(\x_{\bar z},\x_{\bar z})$.
Then the Gauss formula is
\begin{equation}
    \label{eq:CC-covariant-derivative-H-phi}
    \left\{
    \begin{aligned}
        \nabla_{\x_z} \x_z &= \frac{2\rho_z}{\rho}  \x_z + \phi\nu, \\[2pt]
        \nabla_{\x_{\bar{z}}} \x_z &= \nabla_{\x_z} \x_{\bar{z}} = \frac{\rho^2}{2} H\nu, \\[2pt]
        \nabla_{\x_{\bar{z}}} \x_{\bar{z}} &= \frac{2\rho_{\bar{z}}}{\rho} \x_{\bar{z}} + \bar{\phi}\nu,
    \end{aligned}
    \right.
\end{equation}
and the Weingarten formula is
\begin{equation}
    \label{eq:CC-covariant-derivative-normal}
    \left\{
    \begin{aligned}
        \nabla_{\x_z} \nu &=  -H \x_z  -\frac{2}{\rho^2} \phi \x_{\bar{z}},\\[2pt]
        \nabla_{\x_{\bar{z}}} \nu &=  -\frac{2}{\rho^2} \bar{\phi} \x_z  -H \x_{\bar{z}}.
    \end{aligned}
    \right.
\end{equation}

The weighted volume $V_f(\Sigma)$ of $\Sigma$ is defined by
\[ V_f(\Sigma) = \int_\Sigma e^{-f}\diff\Sigma, \]
where $\diff\Sigma$ is the volume form of $g_\Sigma$. 
The first variation of the weighted volume gives
\begin{equation}
    \label{eq:1st-variation-weighted-volume}
  \dd{t}{t=0} V_f(\Sigma_t) = -2\int_\Sigma \left( H + \frac12\diff f(\nu) \right) \barg{T,\nu} \diff\Sigma_f,
\end{equation}
where $T$ is a compactly supported variational vector field on $\Sigma$ and $\diff\Sigma_f=e^{-f}\diff\Sigma$. 

Denote $H_f = H + \frac12\diff f(\nu)$ and define
\begin{equation*}
  \cH_f := e^{\frac{1}{2}f}H_f.
\end{equation*}

\section{Weighted Hopf Differential}
\label{sec:Hopf-differential-3-mfd}

The Hopf differential of $\Sigma$ is
\[ \Phi = \phi\diff z^2=\II(\x_z,\x_z)\diff z^2. \]
Recall the fact that $\Phi$ vanishes identically on $\Sigma$ if and only if $\Sigma$ is totally umbilical.

In the weighted setting, as in \cite{AlencarSilvaNetoZhouHopf-type-theorem}, we may define the weighted Hopf differential $\Phi_f$ on $\Sigma$ by
\begin{equation}
    \label{eq:Phi-f}
    \Phi_f := e^{-\frac{1}{2}f}\Phi.
\end{equation}
Let $\phi_f = e^{-\frac{1}{2}f}\phi$. Then $\Phi_f = \phi_f \diff z^2$ is clearly a quadratic differential.

\begin{proposition}\label{prop:phif=0}
	Assume that $\Sigma$ is compact. Then $\Sigma$ is totally umbilical if and only if $\Phi_f\equiv 0$.
\end{proposition}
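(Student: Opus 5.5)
This is essentially immediate from the definition $\Phi_f = e^{-\frac12 f}\Phi$. The plan is to reduce the statement to the classical fact, recalled just above, that $\Phi\equiv 0$ if and only if $\Sigma$ is totally umbilical.

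\textbf{Step 1: the factor is never zero.} The weight $e^{-\frac12 f}$, with $f$ evaluated along $\x$, is a smooth and everywhere positive function on $\Sigma$. It is also independent of the isothermal chart. Hence in any local complex coordinate $z$ we have $\phi_f=0$ exactly where $\phi=0$. Therefore $\Phi_f\equiv 0$ on $\Sigma$ if and only if $\Phi\equiv 0$ on $\Sigma$.

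\textbf{Step 2: the pointwise meaning of $\phi=0$.} For completeness, verify the recalled fact that $\phi=\II(\x_z,\x_z)$ vanishes at a point precisely when that point is umbilical. Expanding $\x_z=\frac12(\x_u-\ii\x_v)$ gives
\[
4\phi=\II(\x_u,\x_u)-\II(\x_v,\x_v)-2\ii\,\II(\x_u,\x_v).
\]
Since $\x_u,\x_v$ are orthogonal with the same length $\rho$, the vanishing of $\phi$ means the trace-free part of $\II$ is zero. That is, $\II=Hg_\Sigma$ at that point.

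\textbf{Step 3: conclusion.} Combining Steps 1 and 2 over all charts gives the equivalence. Compactness plays no real role here; it is presumably stated for later use in the Hopf argument.

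There is no genuine obstacle. The only point needing care is that $\Phi_f$ is a well-defined global quadratic differential, which holds because multiplying by a global positive function preserves the transformation rule $\phi\,\diff z^2=\phi'\,\diff w^2$.
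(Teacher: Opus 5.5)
Your proof is correct and follows the paper's argument: since $e^{-\frac12 f}$ is positive, $\Phi_f\equiv 0$ if and only if $\Phi\equiv 0$, and the classical characterization of umbilicity then finishes it. You are also right that compactness is not needed; the paper uses it only to get boundedness of the factor, while positivity alone already gives the pointwise equivalence.
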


\begin{proof}
    The compactness of $\Sigma$ guarantees that $e^{-\frac{1}{2}f}$ is positive and bounded, so $\Phi\equiv 0$ if and only if $\Phi_f\equiv 0$. The proof is completed by the fact that $\Sigma$ is totally umbilical if and only if $\Phi \equiv 0$.
\end{proof}

For convenience, let $\pz=\frac{\partial}{\partial z}$ and $\pbz=\frac{\partial}{\partial \bar{z}}$. We define a $(0,2)$-tensor $\cR_f$ on $M$ as
\begin{equation}
    \label{eq:R-f}
  \cR_f := \Ric + \frac{1}{2}\hess f + \frac{1}{4}\diff f\otimes\diff f.
\end{equation}
Now we derive the fundamental formula for the $\bar{\partial}$-derivative of the weighted Hopf differential $\Phi_f$.

\begin{proposition}
    \label{prop:Hopf-diff-derivative-3-mfd}
    In a local complex coordinate system on $\Sigma$, we have
    \begin{equation}
        \pbz\phi_f = e^{-\frac{1}{2}f}\barg{\x_{\bar{z}},\x_z}\left( e^{-\frac{1}{2}f}\pz\left( \cH_f \right) - \cR_f(\x_z,\nu)\right).
    \end{equation}
\end{proposition}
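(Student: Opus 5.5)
The plan is to first derive the classical Codazzi-type formula for $\pbz\phi$ on a surface in a Riemannian $3$-manifold, and then to incorporate the weight $e^{-\frac12 f}$ by direct differentiation. The expectation is that all the $f$-dependent terms assemble exactly into $\pz\cH_f$ and the extra pieces $\frac12\hess f+\frac14\diff f\otimes\diff f$ of $\cR_f$.

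\emph{Step 1 (unweighted Codazzi formula).} I will write $\phi=\barg{\nabla_{\x_z}\x_z,\nu}$ and differentiate in $\bar z$:
\[
\pbz\phi=\barg{\nabla_{\x_{\bar z}}\nabla_{\x_z}\x_z,\nu}+\barg{\nabla_{\x_z}\x_z,\nabla_{\x_{\bar z}}\nu}.
\]
\begin{itemize}
\item For the first term, since $[\partial_z,\partial_{\bar z}]=0$, I will commute derivatives to get $\nabla_{\x_z}\nabla_{\x_{\bar z}}\x_z+R(\x_{\bar z},\x_z)\x_z$.
\item By \eqref{eq:CC-covariant-derivative-H-phi}, the normal part of $\nabla_{\x_z}\bigl(\frac{\rho^2}{2}H\nu\bigr)$ is $\pz\bigl(\frac{\rho^2}{2}H\bigr)=\rho\rho_zH+\frac{\rho^2}{2}H_z$.
\item By \eqref{eq:CC-covariant-derivative-H-phi} and \eqref{eq:CC-covariant-derivative-normal}, the second term equals $-\rho\rho_zH$, which cancels the $\rho\rho_z H$ above.
\end{itemize}
Hence
\[
\pbz\phi=\tfrac{\rho^2}{2}H_z+R(\x_{\bar z},\x_z,\x_z,\nu).
\]
Next I will compute $\Ric(\x_z,\nu)$ in the complex frame $\{\x_z,\x_{\bar z},\nu\}$, whose inverse metric has entries $2/\rho^2$ on the $z\bar z$ slots. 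The term with both entries equal to $\x_z$ vanishes by antisymmetry. The symmetries of $R$ then give $\Ric(\x_z,\nu)=-\frac{2}{\rho^2}R(\x_{\bar z},\x_z,\x_z,\nu)$. Therefore
\[
\pbz\phi=\tfrac{\rho^2}{2}\bigl(H_z-\Ric(\x_z,\nu)\bigr).
\]
The step I expect to need the most care is this index and sign bookkeeping with the complex frame and the curvature convention of Section~\ref{sec:preliminaries-3-mfd}.

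\emph{Step 2 (weights).} Differentiating $\phi_f=e^{-\frac12 f}\phi$ gives
\[
\pbz\phi_f=e^{-\frac12 f}\bigl(\pbz\phi-\tfrac12 f_{\bar z}\phi\bigr).
\]
On the other side I will expand
\[
e^{-\frac12 f}\pz\bigl(e^{\frac12 f}H_f\bigr)=H_z+\tfrac12\pz\bigl(\diff f(\nu)\bigr)+\tfrac12 f_zH_f .
\]
Using \eqref{eq:CC-covariant-derivative-normal},
\[
\pz\bigl(\diff f(\nu)\bigr)=\hess f(\x_z,\nu)-Hf_z-\tfrac{2}{\rho^2}\phi f_{\bar z}.
\]
The $Hf_z$ terms cancel, which leaves
\[
e^{-\frac12 f}\pz\cH_f=H_z+\tfrac12\hess f(\x_z,\nu)+\tfrac14\,\diff f(\x_z)\,\diff f(\nu)-\tfrac{1}{\rho^2}\phi f_{\bar z}.
\]

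\emph{Step 3 (assembly).} Subtracting $\cR_f(\x_z,\nu)$ and multiplying by $\barg{\x_{\bar z},\x_z}=\rho^2/2$ gives
\[
\tfrac{\rho^2}{2}\bigl(H_z-\Ric(\x_z,\nu)\bigr)-\tfrac12 f_{\bar z}\phi .
\]
By Step 1 this equals $\pbz\phi-\frac12 f_{\bar z}\phi$. Multiplying by $e^{-\frac12 f}$ then yields the claimed identity. The key structural point is that the cross term $-\phi f_{\bar z}/\rho^2$ produced by $\pz\bigl(\diff f(\nu)\bigr)$ is exactly what absorbs the derivative of the weight $e^{-\frac12 f}$ in $\phi_f$. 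This is why both the exponent $\frac12$ in $\cH_f$ and the coefficients $\frac12,\frac14$ in $\cR_f$ are forced.
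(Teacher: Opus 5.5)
Your proposal is correct and follows the paper's proof essentially step by step. The shared steps are the commutation giving $\pbz\phi=\barg{\x_z,\x_{\bar z}}\pz H+R(\x_{\bar z},\x_z,\x_z,\nu)$, the curvature identity rewriting the last term as $-\barg{\x_{\bar z},\x_z}\Ric(\x_z,\nu)$, and the same Weingarten expansion of $\pz(\diff f(\nu))$, whose $-\frac{2}{\rho^2}\phi\pbz f$ term absorbs the derivative of the weight. The only difference is cosmetic: you verify the curvature identity by tracing in the complex frame $\{\x_z,\x_{\bar z},\nu\}$ with inverse-metric entries $2/\rho^2$, whereas the paper expands both sides in the real orthonormal frame $\{\x_u/\rho,\x_v/\rho,\nu\}$.
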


\begin{proof}

Direct computation shows 
    \begin{align*}
        \partial_{\bar{z}}\phi &= \pbz \left( \barg{\nabla_{\x_z}\x_z,\nu}\right)  = \barg{\nabla_{\x_{\bar z}}\nabla_{\x_z}\x_z,\nu} + \barg{\nabla_{\x_z}\x_z,\nabla_{\x_{\bar z}}\nu}\\
        &= \barg{\nabla_{\x_z}\nabla_{\x_{\bar{z}}}\x_z,\nu} + \barg{\nabla_{\x_z}\x_z,\nabla_{\x_{\bar{z}}}\nu} + R(\x_{\bar{z}},\x_z,\x_z,\nu),
    \end{align*}
where $[\x_z,\x_{\bar{z}}] = 0$ is used in the last equality.

    From \eqref{eq:CC-covariant-derivative-H-phi} and \eqref{eq:CC-covariant-derivative-normal}, we have 
    \begin{align*}
        \barg{\nabla_{\x_z}\nabla_{\x_{\bar{z}}}\x_z,\nu} &= \barg{\nabla_{\x_z}\left(\frac{\rho^2}{2}H\nu\right),\nu} \\
        &= \frac{\rho^2}{2}\partial_z H + \rho\rho_z H,
    \end{align*}
    and
	\begin{align*}
	\barg{\nabla_{\x_z}\x_z,\nabla_{\x_{\bar{z}}}\nu}  = -\rho\rho_zH.
	\end{align*}	
    
	Thus, we have
    \begin{equation}\label{eq:pzbphi}
        \partial_{\bar{z}}\phi = \barg{\x_z,\x_{\bar z}}\partial_z H + R(\x_{\bar{z}},\x_z,\x_z,\nu).
    \end{equation}

	 Now, we claim that
    \begin{equation}\label{eq:R}
        R (\x_{\bar{z}},\x_z,\x_z,\nu) = -g(\x_{\bar{z}},\x_z)\Ric(\x_z,\nu).
    \end{equation}
    In fact, taking 
    $\set{e_1=\frac{\x_u}{\rho},e_2=\frac{\x_v}{\rho},e_3=\nu}$
    as a local orthonormal frame on $M$, we know
	\[ \x_z=\frac{\rho}{2}(e_1 - \ii e_2) \quad \text{ and } \quad \x_{\bar{z}}=\frac{\rho}{2}(e_1 + \ii e_2). \] 
	Then
	\begin{align*}
        R (\x_{\bar{z}},\x_z,\x_z,\nu) 
        &=\frac{\rho^3}{8}R\left(e_1+\ii e_2,e_1-\ii e_2,e_1-\ii e_2,\nu\right)\\
        & = - \frac{\rho^3}{4}\left(\ii R(e_2,e_1,e_1,\nu)+R(e_2,e_1,e_2,\nu)\right)  \\
		\Ric(\x_z,\nu)&=\frac{\rho}{2}\left(\Ric\left(e_1-\ii e_2,\nu\right)\right) \\
		&=\frac{\rho}{2}\left(R(e_2,e_1,\nu,e_2)-\ii R(e_1,e_2,\nu,e_1)\right).
    \end{align*}

	Combining \eqref{eq:pzbphi} and \eqref{eq:R}, we have
    \begin{align*}
        \partial_{\bar{z}}\phi = \barg{\x_{\bar{z}},\x_z}\left( \pz H - \Ric(\x_z,\nu) \right).
    \end{align*}

    By a direct computation and the formula above, we have
    \begin{align*}
        \partial_{\bar{z}}\phi_f  &= e^{-\frac{1}{2}f}\left(\pbz\phi - \frac{1}{2}\phi\pbz f\right)\\
		&= e^{-\frac{1}{2}f} \barg{\x_{\bar{z}},\x_z} \left( \pz H - \Ric(\x_z,\nu) - \frac{\phi\pbz f}{\rho^2} \right).
    \end{align*}

	On the other hand,
	\begin{align*}
	e^{-\frac{1}{2}f}\pz\left( \cH_f \right)&=\pz H_f  + \frac{1}{2}\pz f H_f \\
	&=\pz H +\frac12 \pz(\diff f(\nu))+\frac{1}{2}\pz f H+\frac{1}{4}\pz f \diff f(\nu).
	\end{align*}
	From \eqref{eq:CC-covariant-derivative-normal}, we have
    \begin{align*}
        \pz(\diff f(\nu)) 
        & = \hess f(\x_z,\nu) - H\pz f - \frac{2}{\rho^2}\phi\pbz f.
    \end{align*}
	Thus
	\begin{align*}
	e^{-\frac{1}{2}f}\pz\left( \cH_f \right)
	&=\pz H + \left( \frac12\hess f + \frac{1}{4}\diff f\otimes\diff f \right)(\x_z,\nu)- \frac{\phi\pbz f}{\rho^2}.
	\end{align*}
	
	Finally,
    \begin{equation*}
        \pbz\phi_f  = e^{-\frac{1}{2}f}\barg{\x_{\bar{z}},\x_z}\left( e^{-\frac{1}{2}f}\pz\left( \cH_f  \right) - \cR_f (\x_z,\nu)\right).
    \end{equation*}
    
\end{proof}

\begin{proof}[Proof of Theorem \ref{thm:Hopf-type-thm-3-mfd}]

Under the assumptions of Theorem \ref{thm:Hopf-type-thm-3-mfd}, we know that \[ \cR _f(\x_z,\nu) = \lambda\barg{\x_z,\nu} = 0 \] and $\pz\cH_f  = 0$ hold all over $\Sigma$. By Proposition \ref{prop:Hopf-diff-derivative-3-mfd}, we obtain $\pbz\phi_f\equiv 0$ which indicates the weighted Hopf differential $\Phi_f$ is holomorphic. Since every holomorphic quadratic differential vanishes identically on a compact Riemann surface $\Sigma$ of genus $0$ (see \cite[Page 140]{hopf2013differential}), we know $\Phi_f\equiv 0$ on $\Sigma$. Then Proposition \ref{prop:phif=0} implies $\Sigma$ is totally umbilical.
\end{proof}

\bibliographystyle{amsplain}

\end{document}